\newcommand{\andSep}{\,\,\,\text{ and }\,\,\,}
\newcommand{\CC}{{\mathbb{C}}}
\newcommand{\RR}{{\mathbb{R}}}
\newcommand{\ca}{$C^*$-algebra}
\newcommand{\spec}{\sigma}
\newcommand{\reg}{{\mathrm{reg}}}
\DeclareMathOperator{\dist}{dist}
\def\today{\number\day\space\ifcase\month\or   January\or February\or
   March\or April\or May\or June\or   July\or August\or September\or
   October\or November\or December\fi\   \number\year}
\newtheorem{lma}{Lemma}[section]
\newaliascnt{thmCt}{lma}
\newtheorem{thm}[thmCt]{Theorem}
\newaliascnt{corCt}{lma}
\newtheorem{cor}[corCt]{Corollary}
\newaliascnt{prpCt}{lma}
\theoremstyle{definition}
\newaliascnt{dfnCt}{lma}
\newcounter{theoremintro}
\newaliascnt{thmIntroCt}{theoremintro}
\newtheorem{thmIntro}[thmIntroCt]{Theorem}
\title{Distance to regular elements and polar decompositions in a C*-algebra}
\author{Hannes Thiel}
\address{Hannes~Thiel, 
Department of Mathematical Sciences, Chalmers University of Technology and University of
Gothenburg, Gothenburg SE-412 96, Sweden.}
\email{hannes.thiel@chalmers.se}
\urladdr{www.hannesthiel.org}
\thanks{
The author was partially supported by the Knut and Alice Wallenberg Foundation (KAW 2021.0140) and by the Swedish Research Council project grant 2024-04200.
}
\subjclass[2010]%
{Primary
46L05. 
Secondary
47C15, 
47A63. 
}
\keywords{regular elements, well-supported elements, closed range operators, $C^*$-algebras, polar decomposition}
\date{\today}
\begin{document}

\begin{abstract}
We show that the distance from an element of a \ca{} to the set of regular elements is the infimum of the $\delta>0$ for which the $\delta$-cut-down of the element admits a polar decomposition within the algebra.
This parallels results of Pedersen and Brown-Pedersen describing the distance to invertible and quasi-invertible elements through polar decompositions of cut-downs whose polar parts are unitaries or extreme partial isometries.
\end{abstract}

\maketitle

\section{Introduction}

An element $a$ of a \ca{} $A$ is called \emph{regular} if there exists $b \in A$ such that $a=aba$.
It is well known that this occurs precisely when $a$ is \emph{well-supported}, meaning that $0$ is isolated in the spectrum of $a^*a$, or, equivalently, when $a$ has \emph{closed range}, that
is, when $\pi(a)$ has closed range for every representation $\pi$ of $A$ on a Hilbert space;
see~\cite{HarMbe92GenInvCAlg1} and \cite[Paragraphs~II.3.2.9 and II.3.2.10]{Bla06OpAlgs}.
In this situation $a$ admits a polar decomposition $a=v|a|$, where $v$ is a partial isometry in~$A$ and $|a| = (a^*a)^{\frac{1}{2}}$. 
This mirrors the case of invertible elements, which always admit polar decompositions with unitary polar parts; 
the passage from invertible to regular elements corresponds to replacing the unitary by a partial isometry.

Every element of a von Neumann algebra admits a polar decomposition $a = v|a|$, and in that setting the partial isometry $v$ is uniquely determined by the requirement that $v^*v$ and $vv^*$ are the support projections of $|a|$ and $|a^*|$, respectively.
For an element $a$ of a \ca{} $A$, the canonical polar decomposition $a=v|a|$ exists in the von Neumann algebra $A^{**}$.
The positive part $|a|$ always belongs to $A$, and if $a$ is regular then the partial isometry $v$ lies in $A$ as well.
In general, however, $v$ need not belong to~$A$.

In this paper we use a weaker notion: 
we say that $a\in A$ \emph{admits a polar decomposition in} $A$ if there exists some partial isometry $w \in A$ such that $a=w|a|$, without imposing the uniqueness conditions on $w$ that are part of the definition of the polar decomposition in the von Neumann algebra setting.
This terminology is inspired by Pedersen \cite{Ped89ThreeQuavers}.
Even this weaker condition need not hold in general.  
For example, for the function $f\in C([0,1])$ defined by $f(t)=t\exp(i/t)$ for $t>0$ and $f(0)=0$, there is no partial isometry $w\in C([0,1])$ with $f=w|f|$.

\medskip

We establish a precise relationship between the distance from an element to the set $A_\reg$ of regular elements in a \ca{} $A$ and the existence of polar decompositions for suitable cut-downs of the element.
This fits naturally alongside earlier results of Pedersen \cite{Ped87UnitaryExt} (see also R{\o}rdam's seminal paper \cite{Ror88AdvUnitaryRank}) and of Brown–Pedersen \cite{BroPed95GeomUnitBall}, who described the distances to the invertible and quasi-invertible elements in terms of polar decompositions whose partial isometries are, respectively, unitaries and extreme partial isometries.  
Recall that for $\delta>0$, the $\delta$-cut-down of a positive element $a$ is defined by functional calculus for the function $t\mapsto\max\{t-\delta,0\}$, and is denoted $(a-\delta)_+$.
For a general element $a \in A$, we define its $\delta$-cut-down to be
\[
a_\delta := v (|a|-\delta)_+,
\]
where $a=v|a|$ is the canonical polar decomposition of $a$ in $A^{**}$.
We note that $vf(|a|)$ belongs to $A$ for every positive, continuous function $f$ vanishing at $0$;
in particular, each cut-down $a_\delta$ lies in $A$.
The following is the main result of the paper.

\begin{thmIntro}[\ref{prp:DistReg}]
\label{thmA}
Let $a$ be an element in a \ca{} $A$, with polar decomposition $a=v|a|$ in $A^{**}$.
Then for $\gamma \geq 0$, the following are equivalent:
\begin{enumerate}
\item
We have $\dist(a,A_\reg) \leq \gamma$.
\item
For every $\delta > \gamma$, there is a partial isometry $w \in A$ such that $we_\delta = ve_\delta$, where $e_\delta$ denotes the spectral projection of $|a|$ in $A^{**}$ associated to $(\delta,\infty)$.
\item
For every continuous function $f\colon\RR\to\RR$ vanishing on a neighborhood of $[0,\gamma]$, the element $vf(|a|)$ admits a polar decomposition within $A$.
\item
For every $\delta > \gamma$, the $\delta$-cut-down of $a$ admits a polar decomposition within~$A$.
\end{enumerate}
\end{thmIntro}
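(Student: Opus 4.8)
The plan is to route every equivalence through condition~(4), using the description of the cut-down that one obtains by computing its modulus and its canonical polar part.

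\emph{Preliminaries, and the equivalences (2)$\Leftrightarrow$(4)$\Leftrightarrow$(3).} Since the support projection of $|a|$ equals $v^*v$ and dominates the spectral projection $e_\delta$ of $|a|$ over $(\delta,\infty)$, one has $a_\delta^*a_\delta=(|a|-\delta)_+v^*v(|a|-\delta)_+=(|a|-\delta)_+^2$, so $|a_\delta|=(|a|-\delta)_+$ and the canonical polar part of $a_\delta$ in $A^{**}$ is $ve_\delta$. Thus condition~(4) at a fixed $\delta$ says precisely that there is a partial isometry $w\in A$ with $w(|a|-\delta)_+=v(|a|-\delta)_+$. Multiplying this identity on the right by the functions $h_n(|a|)\in C^*(|a|)$, where $h_n(t)=(t-\delta)_+\bigl((t-\delta)_+^2+\tfrac1n\bigr)^{-1}$, and passing to the limit (so that $(|a|-\delta)_+h_n(|a|)\nearrow e_\delta$ $\sigma$-strongly) shows it is equivalent to $we_\delta=ve_\delta$, i.e.\ to~(2) at $\delta$. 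For~(3): the choice $f(t)=(t-\delta)_+$ gives (3)$\Rightarrow$(4); conversely, a continuous $f$ vanishing on a neighbourhood of $[0,\gamma]$ vanishes on $[0,\delta]$ for some $\delta>\gamma$, so $f(|a|)=e_\delta f(|a|)$, and combining the $w$ from~(2) at this $\delta$ with the polar decomposition of $f(|a|)$ inside the commutative algebra $C^*(|a|)$ —reduced to the positive case by treating $f_+$ and $f_-$ separately— yields a partial isometry in $A$ realising a polar decomposition of $vf(|a|)$; so (4)$\Rightarrow$(3).

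\emph{(4)$\Rightarrow$(1).} The point is the following lemma: if $x\in A$ admits a polar decomposition $x=w|x|$ in~$A$ (with $w$ a partial isometry), then $x\in\overline{A_\reg}$. Indeed, comparing $x^*x=|x|w^*w|x|$ with $x^*x=|x|^2$ forces $(1-w^*w)|x|=0$, so $w^*w$ both dominates and commutes with $|x|$; then $x+\epsilon w=w(|x|+\epsilon w^*w)$ satisfies $(x+\epsilon w)^*(x+\epsilon w)=(|x|+\epsilon w^*w)^2$, hence $|x+\epsilon w|=|x|+\epsilon w^*w$, whose spectrum lies in $\{0\}\cup[\epsilon,\|x\|+\epsilon]$; so $x+\epsilon w$ is regular and $\|x-(x+\epsilon w)\|\le\epsilon$. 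Applying this to $x=a_\delta$ (legitimate by~(4)) and using $\|a-a_\delta\|=\|v\min(|a|,\delta)\|\le\delta$ gives $\dist(a,A_\reg)\le\delta$ for every $\delta>\gamma$, hence~(1).

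\emph{(1)$\Rightarrow$(4): the main obstacle.} Fix $\delta>\gamma$, pick $\delta_0\in(\gamma,\delta)$, and choose $b\in A_\reg$ with $\|a-b\|<\delta_0$; write $b=u|b|$ with $u\in A$ a partial isometry and $p=u^*u$ the support projection of $|b|$—a genuine projection in $A$, since $b$ is regular. One wants a partial isometry $w\in A$ with $w(|a|-\delta)_+=v(|a|-\delta)_+$ (equivalently $we_\delta=ve_\delta$), and the naive guess $w=u$ fails because $u$ need not agree with $v$ on the range of $e_\delta$. The strategy is to produce $w$ by correcting $u$: from $u^*a=|b|+u^*(a-b)$, and multiplying by a continuous approximate pseudo-inverse $g_\epsilon(|a|)$ of $|a|$ on $(\delta,\infty)$ (of norm at most $1/\delta$, with $g_\epsilon(|a|)|a|\nearrow e_\delta$), one controls—using that $\|a-b\|<\delta_0$—how far $ve_\delta$ is from elements of $A$ built out of $b$ and functions of $|a|$; combining this with the fact that $b^*b$, being well-supported, has honest support projections and pseudo-inverses in $A$, and with a standard perturbation lemma factoring $(|a|-\delta)_+$ through $|b|^2$, one assembles the required $w$. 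The hard part is exactly this assembly: upgrading the approximate agreement of $u$ with $v$ on $e_\delta$ (all that $\|a-b\|<\delta_0$ gives directly) to the exact polar decomposition of the cut-down inside $A$. This is where the regularity of $b$ is used in full, and where the argument refines Pedersen's treatment of the invertible case, with ``unitary'' replaced by ``partial isometry''.
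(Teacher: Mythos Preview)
Your equivalences among (2), (3), and (4) are fine, and your argument for (4)$\Rightarrow$(1) is correct and in fact slightly cleaner than the paper's (you stay inside $A$ by using $a_\delta+\varepsilon w$ rather than passing to the unitization). The paper's route is the cycle (1)$\Rightarrow$(2)$\Rightarrow$(3)$\Rightarrow$(4)$\Rightarrow$(1), and the steps (2)$\Rightarrow$(3)$\Rightarrow$(4)$\Rightarrow$(1) are of the same easy nature as yours.

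The genuine gap is (1)$\Rightarrow$(4). You describe a strategy---correct $u$ coming from the polar decomposition of the nearby regular $b$, control $ve_\delta$ via approximate pseudo-inverses, invoke an unnamed ``standard perturbation lemma'' factoring $(|a|-\delta)_+$ through $|b|^2$---but you do not carry it out, and you explicitly label the assembly as ``the hard part''. None of the ingredients you list produces a partial isometry in $A$: a R{\o}rdam-type factorization $(|a|-\delta)_+ = r^*|b|r$ gives contractions, not partial isometries, and approximate agreement of $u$ with $v$ on $e_\delta$ does not by itself yield exact agreement of some $w\in A$ with $v$ on $e_\delta$. This is precisely the content that the paper isolates as a separate lemma.

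The paper's construction is quite different from your sketch and worth knowing. Given regular $x$ with $\|a-x\|=\beta<\gamma<\delta$, one first forms $b:=x\bigl(1+\beta g(|a|)v^*y\bigr)^{-1}f(|a|)$ for suitable $f,g$ so that $b$ is still regular (a product of a regular element and an invertible) and satisfies the exact identity $f_\gamma b = ve_\gamma$. Thus $b$ already agrees with $v$ above level $\gamma$; viewed as a $3\times3$ block matrix with respect to the spectral decompositions at levels $\gamma$ and $\delta$, $b$ is lower-triangular with isometric top two diagonal blocks $ve_\delta$ and $v(e_\gamma-e_\delta)$. One then subtracts a correction $h_1(|a^*|)b(1-h_2(|a|))\in A$ to kill the $(3,1)$-entry, obtaining $c\in A$ which is again lower-triangular. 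Regularity of $c$ is checked by a block-matrix criterion (regularity of a triangular matrix with invertible corner reduces to regularity of the complementary corner), and the partial isometry $w$ from the polar decomposition of $c$ then satisfies $we_\delta=ve_\delta$. The key idea you are missing is this passage through an auxiliary \emph{regular} element $c\in A$ whose polar part, automatically in $A$, has the required behaviour on $e_\delta$; your outline never produces such an element.
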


The case $\dist(a,A_\reg)=0$ is of particular interest and has been considered before, for example in work of Xue, who proved in \cite[Proposition~2.1]{Xue07APDCAlg} that an element $a$ belongs to the closure of $A_\reg$ precisely when it has `approximate polar decompositions' in the sense that for every $\varepsilon>0$ there exists a partial isometry $w \in A$ such that $\| a - w|a| \| < \varepsilon$.
Our result strengthens this significantly: we obtain \emph{actual} polar decompositions for all cut-downs of $a$ and, more generally, for all elements of the form $v f(|a|)$, where $f$ is a positive, continuous function vanishing near $0$. 

In forthcoming work \cite{Thi25pre:RegRich}, we will apply the results of this paper to study \ca{s} that contain a dense set of regular elements.

\section{Proof of the main result}

An element $a$ in a \ca{} $A$ is said to be \emph{Moore-Penrose invertible} if there exists $b \in A$ such that $a=aba$ and $b=bab$, and such that $ab$ and $ba$ are projections.
In this case, the stated conditions determine $b$ uniquely, and one calls it the \emph{Moore-Penrose inverse} of $a$, denoted by $a^\dag$.

It is known that every regular element in a \ca{} is Moore-Penrose invertible;
see \cite{HarMbe92GenInvCAlg1}.
Let us give some details.
Let $a$ be a regular element in $A$.
Then the partial isometry $v$ of the canonical polar decomposition $a=v|a|$ in $A^{**}$ belongs to $A$.
Further, there is some $\varepsilon>0$ such that the spectrum of $|a|$ satisfies
\[
\spec(|a|) \subseteq \{0\} \cup [\varepsilon,\infty).
\]
It follows that the function $g \colon \RR\to\RR$ given by $g(t)=0$ for $t \leq 0$ and $g(t)=\tfrac{1}{t}$ for $t>0$ is continuous on $\spec(|a|)$, and we have
\[
g(|a|)|a| = v^*v.
\]
The Moore-Penrose inverse of $a$ is then
\[
a^\dag = g(|a|)v^*.
\]

This shows that $a^\dag$ belongs to the \ca{} generated by $a$.
It follows that regularity is independent of the containing \ca{}.
In particular, an element is regular in $A$ if and only if it is regular in $A^{**}$.
Further, if~$a$ is contained in a corner~$pAp$, then $a$ is regular in $A$ if and only if $a$ is regular in $pAp$.
This will be used in the proofs below.

\begin{lma}
\label{prp:MultRegInv}
Let $a$ be a regular element in a \ca{} $A$.
Then $uav$ is regular for every invertible elements $u, v$ in the minimal unitization $\widetilde{A}$.
\end{lma}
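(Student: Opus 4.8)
The plan is to lean on the purely algebraic description of regularity recalled above: an element $a \in A$ is regular exactly when there is some $b \in A$ with $a = aba$. So I would fix such a $b$ together with invertible elements $u, v$ in $\widetilde{A}$, and propose $c := v^{-1} b u^{-1}$ as a candidate satisfying the defining equation for $uav$. A direct computation then gives
\[
(uav)\, c\, (uav) = ua(vv^{-1}) b (u^{-1}u) av = u(aba)v = uav,
\]
so $uav$ is regular.

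The one small point to address is that the witnessing element should lie in $A$, not merely in $\widetilde{A}$. Since $A$ is a closed two-sided ideal in its minimal unitization and $b \in A$, the element $c = v^{-1} b u^{-1}$ belongs to $A$ (and so does $uav$), which is all that is required. Alternatively, one can sidestep this entirely by invoking the fact noted above that regularity is insensitive to the ambient \ca{}: it suffices to check that $uav$ is regular in $\widetilde{A}$, where $c \in \widetilde{A}$ is immediate, and then transfer this back to $A$.

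I do not expect a genuine obstacle here. The content is simply that, unlike finer properties such as being well-supported with a prescribed support projection, bare regularity is preserved under left and right multiplication by invertibles, and this is visible at a glance from the identity $a = aba$. Should one instead wish to argue through the well-supported formulation (that $0$ be isolated in $\spec((uav)^*(uav))$) or through closed range of $\pi(uav)$ for every representation $\pi$, one would have to insert the true but slightly less transparent observation that pre- and post-composing a closed-range Hilbert space operator with bounded invertibles preserves closed range; the algebraic route above bypasses this, and it is the one I would take.
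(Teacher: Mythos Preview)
Your proposal is correct and follows essentially the same route as the paper: both fix $b$ with $a=aba$, set $c := v^{-1}bu^{-1}$, and verify $(uav)c(uav)=uav$ directly. The paper's proof is slightly terser (it simply asserts $c \in A$ without elaboration and does not discuss alternative formulations), but the content is identical.
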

\begin{proof}
Let $b \in A$ satisfy $a=aba$. (For example, $b=a^\dag$.)
Set $c := v^{-1}bu^{-1} \in A$.
Then
\[
(uav)c(uav)
= (uav)\left(v^{-1}bu^{-1}\right)(uav)
= uabav
= uav,
\]
showing that $uav$ is regular.
\end{proof}

Given an operator as a $2 \times 2$ block matrix with one off-diagonal entry vanishing, and one diagonal entry invertible, it is well-known that invertibility of the other diagonal entry characterizes invertibility of the whole operator;
see, for example, \cite[Lemma~2.3]{BroPed91CAlgRR0} or \cite[Lemma~3.4]{Rie83DimSRKThy}.
This was generalized to characterizations of left-invertibility and right-invertibility (and consequently of quasi-invertibility) by Brown and Pedersen in \cite[Proposition~2.1]{BroPed95GeomUnitBall}.
The next result is a version characterizing regularity.

\begin{lma}
\label{prp:RegBlockMatrix}
Let $p$ and $q$ be projections in a unital \ca{}~$A$, and view elements in $A$ as $2 \times 2$ block matrices, with columns decomposed according to $p\oplus(1-p)$ and rows according to $q\oplus(1-q)$.
Let $x \in A$ with $qx(1-p)=0$, viewed as
\[
x=
\begin{bmatrix}
a & 0\\[3pt]
c & d
\end{bmatrix},
\qquad
\text{with }\quad
a=qxp, \quad
c=(1-q)xp, \quad 
d=(1-q)x(1-p).
\]

Assume that $a$ is invertible in $qAp$, that is, there exists $a^\dag \in pAq$ such that
\[
aa^\dag=q, \andSep
a^\dag a=p.
\]

Then $x$ is regular if and only if $d$ is regular in $(1-q)A(1-p)$ (equivalently, regular  in~$A$). 
\end{lma}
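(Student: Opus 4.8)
The plan is to remove the corner $c$ from $x$ by a one-sided multiplication with an invertible element, reducing to the block-diagonal case, and then to settle that case directly. Write $a^\dag\in pAq$ for the inverse of $a$ in $qAp$, so $aa^\dag=q$ and $a^\dag a=p$, and set $u:=1-ca^\dag$. Since $a^\dag=a^\dag q$ and $c=(1-q)c$, the orthogonality $q(1-q)=0$ gives $a^\dag c=0$; hence $(ca^\dag)^2=c(a^\dag c)a^\dag=0$, so $u$ is invertible with inverse $1+ca^\dag$. Writing $x=a+c+d$ (the $(1,2)$-entry being zero) and noting that the same orthogonality gives $a^\dag c=a^\dag d=0$, so that $a^\dag x=a^\dag a=p$, one computes
\[
ux=x-ca^\dag x=x-cp=x-c=a+d,
\]
using $cp=c$. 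Thus $ux=a+d$ is block-diagonal, with $a\in qAp$ and $d\in(1-q)A(1-p)$. As $ux=u\cdot x\cdot 1$ and $x=u^{-1}\cdot(ux)\cdot 1$, \autoref{prp:MultRegInv} shows that $x$ is regular if and only if $ux$ is; so I may assume $c=0$ and work with $y:=a+d$.

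Next I would show that $y$ is regular if and only if $d$ is. If $d$ is regular in $A$, choose $d'\in A$ with $dd'd=d$ and replace it by $d^\dag:=(1-p)d'(1-q)\in(1-p)A(1-q)$, which still satisfies $dd^\dag d=d$. All mixed products among $a,a^\dag,d,d^\dag$ vanish --- for instance $ad^\dag=(ap)\bigl((1-p)d^\dag\bigr)=0$ and $aa^\dag d=qd=0$, and similarly for the rest --- so
\[
y(a^\dag+d^\dag)y=aa^\dag a+dd^\dag d=a+d=y,
\]
using $aa^\dag a=ap=a$; hence $y$ is regular. Conversely, if $yby=y$ for some $b\in A$, then, since $(1-q)a=0=a(1-p)$ and $(1-q)d=d=d(1-p)$, we have $(1-q)y=y(1-p)=d$ and $(1-q)y(1-p)=d$; compressing $yby=y$ on the left by $1-q$ and on the right by $1-p$ gives $dbd=d$, so $d$ is regular in $A$. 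Replacing $b$ by $(1-p)b(1-q)$ moreover produces a generalized inverse of $d$ inside $(1-p)A(1-q)$, which is the sense in which $d$ is ``regular in $(1-q)A(1-p)$''; conversely any such inverse is in particular a generalized inverse in $A$, so the two notions agree (in the spirit of the corner-invariance of regularity recorded before \autoref{prp:MultRegInv}). Combining the two steps completes the proof.

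I expect this to be essentially bookkeeping; the only points deserving care are the verification that $u$ is genuinely invertible --- which rests on the identity $a^\dag c=0$ --- and keeping the two $2\times2$ pictures of $x$ and $u$ consistent, the columns of $u$ being indexed by $q\oplus(1-q)$ exactly as the rows of $x$ are. For orientation I would bear in mind the Hilbert-space picture: realizing $A$ faithfully on a Hilbert space $H$, the elements $p$ and $q$ are the orthogonal projections onto closed subspaces $H_1$ and $K_1$, the hypothesis says $a\colon H_1\to K_1$ is boundedly invertible, and after the change of variables $u$ the range of $x$ becomes $\mathrm{ran}(a)\oplus\mathrm{ran}(d)=K_1\oplus\mathrm{ran}(d)$; this is closed precisely when $\mathrm{ran}(d)$ is closed in $K_1^\perp$, which is exactly the assertion, since regularity of an element of a \ca{} is equivalent to having closed range.
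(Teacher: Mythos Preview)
Your argument is correct and is essentially the paper's own proof: the paper factors $x=\begin{bmatrix}q&0\\ca^\dag&1-q\end{bmatrix}\begin{bmatrix}a&0\\0&d\end{bmatrix}$, which as an element of $A$ reads $x=(1+ca^\dag)(a+d)$, i.e.\ exactly your identity $ux=a+d$ with $u=1-ca^\dag=(1+ca^\dag)^{-1}$, and then invokes \autoref{prp:MultRegInv}. The only difference is that you spell out the diagonal step (regularity of $a+d$ iff regularity of $d$) by an explicit construction of generalized inverses and a compression argument, whereas the paper simply asserts it.
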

\begin{proof}
We have the factorization 
\[
x
=
\begin{bmatrix}
q & 0\\[2pt]
c\,a^\dag & 1-q
\end{bmatrix}
\begin{bmatrix}
a & 0\\[2pt]
0 & d
\end{bmatrix},
\]
and the left matrix is invertible.
It follows from \autoref{prp:MultRegInv} that $x$ is regular if and only if the diagonal matrix with entries $a$ and $d$ is regular.
This in turn is equivalent to regularity of $a$ and $d$.
Since~$a$ is regular by assumption, it follows that~$x$ is regular if and only if~$d$ is regular.
\end{proof}

Let $a$ be an element in a unital \ca{} $A$, with polar decomposition $a=v|a|$ in $A^{**}$, and let $e_\delta$ denote the spectral projection of $|a|$ in $A^{**}$ associated to $(\delta,\infty)$.
Then $(ve_\delta)|a|_\delta$ is the polar decomposition of $a_\delta$ in $A^{**}$.
Given $\delta>0$, Pedersen \cite[Theorem~5]{Ped87UnitaryExt} showed that the partial isometry $ve_\delta$ admits an extension to a unitary in $A$ whenever $a$ is within distance $<\delta$ of an invertible element of $A$;
see also \cite[Theorem~2.2]{Ror88AdvUnitaryRank}.
Similarly, if $a$ is within distance $<\delta$ of a quasi-invertible element, then $ve_\delta$ admits an extension to an extreme partial isometry in~$A$, by \cite[Theorem~2.2]{BroPed95GeomUnitBall}.
The following result is analogous, showing that $ve_\delta$ can be extended to a partial isometry in $A$ whenever $a$ has distance $<\delta$ to regular element of~$A$.

\begin{lma}
\label{prp:ConstructPI}
Let $a$ be an element in a \ca{} $A$, and suppose that $a$ lies within distance $<\delta$ of a regular element of~$A$.
Then there exists a partial isometry $w \in A$ such that
\[
w e_\delta 
= f_\delta w 
= v e_\delta 
= f_\delta v,
\]
where $a=v|a|$ is the polar decomposition of $a$ in $A^{**}$, and $e_\delta$ and $f_\delta$ denote the spectral projections of $|a|$ and $|a^*|$ in $A^{**}$ associated to $(\delta,\infty)$.
\end{lma}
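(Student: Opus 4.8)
The plan is to reduce the statement to the construction of a single regular element $b'\in A$ whose polar part agrees with $v$ on the large spectral subspace of $\lvert a\rvert$, built by splicing the given regular approximant of $a$ with $v$; this parallels the arguments of Pedersen \cite{Ped87UnitaryExt} and Brown--Pedersen \cite{BroPed95GeomUnitBall} for the unitary and extreme‑partial‑isometry cases. First I would pass to the minimal unitization, which leaves $\lvert a\rvert$, $v$, $e_\delta$ and $f_\delta$ (all of which live in $A^{**}$) unchanged. Since $\lvert a^*\rvert=v\lvert a\rvert v^*$ in $A^{**}$, one has $f_\delta=ve_\delta v^*$, and because $e_\delta\le v^*v$ this gives $f_\delta v=ve_\delta$ automatically; so the chain of four equalities in the statement collapses to the two conditions $(w-v)e_\delta=0$ and $f_\delta(w-v)=0$, which together say that $w$ restricts to the same partial isometry $\operatorname{ran}(e_\delta)\to\operatorname{ran}(f_\delta)$ as $v$.

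Next, choose a regular $b\in A$ with $\lVert a-b\rVert=\delta_0<\delta$, fix $\delta_0<\delta_1<\delta$, and write $e:=e_{\delta_1}$ and $f:=f_{\delta_1}=vev^*$. The key reduction is that it suffices to produce a \emph{regular} $b'\in A$ that is block diagonal for the decomposition of $A^{**}$ with columns along $e\oplus(1-e)$ and rows along $f\oplus(1-f)$, and with $(1,1)$‑corner equal to the partial isometry $ve$. Indeed, block‑diagonality forces $b'^{*}b'$ to commute with $e$ and $\lvert b'\rvert e=\lvert ve\rvert=e$, so the polar part $w:=v_{b'}$ --- which lies in $A$ because $b'$ is regular --- is an isometry $\operatorname{ran}(e)\to\operatorname{ran}(f)$ equal to $v$ there, and since $e_\delta\le e$, $f_\delta\le f$ it therefore agrees with $v$ on $\operatorname{ran}(e_\delta)\to\operatorname{ran}(f_\delta)$, as required. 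To build $b'$ I would start from $vh(\lvert a\rvert)\in A$, where $h\colon[0,\infty)\to[0,1]$ is continuous with $h=0$ on $[0,\delta_0]$ and $h=1$ on $[\delta_1,\infty)$: using $h(\lvert a\rvert)e=e$ one checks this element is already block diagonal for $(e,f)$ with $(1,1)$‑corner $ve$, while its $(2,2)$‑corner $v\bigl(h(\lvert a\rvert)-e\bigr)$ is in general not regular --- and that is its only defect.

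The hypothesis enters through the fact that this defective corner can be replaced --- inside $A$, and without disturbing the block‑diagonal form --- by a regular one manufactured from $b$: because $\lVert a-b\rVert<\delta_1$, the element $b$ is bounded below by $\delta_1-\delta_0>0$ on $\operatorname{ran}(e)$, so its compression to the large part of $\lvert a\rvert$ is invertible in its corner and lines up with $ve$; one transplants the complementary part of $b$ into the $(2,2)$‑corner, clears the resulting off‑diagonal corners by multiplying with invertibles of $\widetilde{A^{**}}$ (\autoref{prp:MultRegInv}), and then \autoref{prp:RegBlockMatrix} identifies the regularity of the spliced element with that of a compression of $b$, which is regular. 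This yields a regular $b'=\left[\begin{smallmatrix}ve&0\\0&D\end{smallmatrix}\right]\in A$ with $D$ regular, and $w:=v_{b'}$ is the desired partial isometry. I expect the splicing step to be the main obstacle: the projections $e$ and $f$ live only in $A^{**}$, so the replacement of the $(2,2)$‑corner must be carried out via functional calculus in $\lvert a\rvert$ (using that $vg(\lvert a\rvert)\in A$ for continuous $g$ vanishing at $0$) combined with $b$, in such a way that the result simultaneously lies in $A$, is exactly block diagonal for $(e,f)$, and has a regular $(2,2)$‑corner --- and the quantitative gap $\delta-\lVert a-b\rVert>0$ is precisely what makes all three demands compatible.
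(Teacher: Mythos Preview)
Your plan is essentially the paper's: build a regular element of $A$ whose block form (for spectral projections of $|a|$ at an intermediate level between $\|a-b\|$ and~$\delta$) has upper-left corner equal to $ve$, verify its regularity via \autoref{prp:MultRegInv} and \autoref{prp:RegBlockMatrix}, and take its polar part as~$w$. For the splicing step you rightly flag as the obstacle, the paper writes the regular approximant as $x=a+\beta y$ with $\beta=\|a-x\|<\gamma<\delta$, forms $b:=x\bigl(1+\beta g(|a|)v^*y\bigr)^{-1}f(|a|)\in A$ for suitable continuous $f,g$ so that $f_\gamma b=ve_\gamma$ holds exactly, and then \emph{subtracts} $h_1(|a^*|)\,b\,(1-h_2(|a|))\in A$ (with continuous $h_i$ transitioning on $[\gamma,\delta]$) rather than multiplying by invertibles in $A^{**}$---this keeps the construction inside $A$ and yields an element that is block diagonal at level~$\delta$ (not at the intermediate level), which already suffices.
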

\begin{proof}
The proof proceeds along the same lines as the argument in \cite[Theorem~2.2]{BroPed95GeomUnitBall}.
We view $A$ as an ideal in its minimal unitization $\widetilde{A}$, and we use $1$ to denote the unit of $\widetilde{A}$.
If $A$ is nonunital, then $\widetilde{A}/A \cong \CC$, and passing to biduals, we obtain a short exact sequence
\[
0 \to A^{**} \to \left( \widetilde{A} \right)^{**} \to \CC^{**}=\CC \to 0.
\]
Since $A^{**}$ is unital, the bidual of $\widetilde{A}$ decomposes as a direct sum of $A^{**}$ and $\CC$.
For $\alpha>0$, we let $e_\alpha$ and $f_\alpha$ denote the spectral projections of $|a|$ and $|a^*|$ in $A^{**}$ associated to $(\alpha,\infty)$.

Choose a regular element $x \in A$ such that $\|a-x\| < \delta$.
Set $\beta := \|a-x\|$ and choose $\gamma$ with
\[
\|a-x\| = \beta < \gamma < \delta.
\]
We may assume that $a \neq x$ (otherwise use $c:=x$ below) and set $y := \tfrac{1}{\beta}(x-a)$ so that
\[
x = a+\beta y, \andSep
\|y\| = 1.
\]

Define continuous functions $f,g \colon [0,\infty)\to\RR$ by:
\[
f(t) =
\begin{cases}
\gamma^{-1}, & t \leq \gamma,\\[2pt]
t^{-1}, & t \ge \gamma,
\end{cases}
\qquad
g(t) =
\begin{cases}
t\gamma^{-2}, & t \leq \gamma,\\[2pt]
t^{-1}, & t \ge \gamma.
\end{cases}
\]

Since $g$ vanishes at $0$, the element $g(|a|)v^*$ belongs to $A$.
Further, using that $\|\beta g\|_\infty = \beta \gamma^{-1} < 1$, it follows that $1+\beta g(|a|)v^*y$ is invertible in $\widetilde{A}$.
We can thus set
\[
b := x\bigl(1+\beta g(|a|)v^*y\bigr)^{-1}f(|a|). 
\]

Note that $b$ belongs to $A$ since $x \in A$.
Since $(1+\beta g(|a|)v^*y)^{-1}f(|a|)$ is invertible, it follows from \autoref{prp:MultRegInv} that $b$ is regular (in $\widetilde{A}$, and consequently also in $A$).

Working in the bidual of $\widetilde{A}$, we have
\[
f_\gamma x
= f_\gamma (a+\beta y)
= v e_\gamma |a| + \beta v e_\gamma v^* y
= v e_\gamma |a| \bigl( 1+\beta g(|a|)v^*y \bigr)
\]
and therefore
\[
f_\gamma b 
= v e_\gamma |a| f(|a|)
= v e_\gamma
= f_\gamma v.
\]

To simplify notation, we set
\begin{align*}
e_1 &:= e_\delta, &
e_2 &:= e_\gamma - e_\delta, &
e_3 &:= 1 - e_\gamma, \\
f_1 &:= f_\delta, &
f_2 &:= f_\gamma - f_\delta, &
f_3 &:= 1 - f_\gamma
\end{align*}
and then view elements in the bidual of $\widetilde{A}$ as $3 \times 3$ block matrices, with columns decomposed according to $e_1 \oplus e_2 \oplus e_3$ and rows according to $f_1 \oplus f_2 \oplus f_3$.

Since $(f_1+f_2)b=(f_1+f_2)v=v(e_1+e_2)$, it follows that $b$ has the form
\[
b=
\begin{bmatrix}
ve_1 & 0 & 0\\[3pt]
0 & ve_2 & 0\\[3pt]
b_{31} & b_{32} & b_{33}
\end{bmatrix}
\qquad
\text{with }\quad
b_{31}=f_3be_1, \quad 
b_{32}=f_3be_2, \andSep
b_{33}=f_3be_3.
\]

Let $h_1,h_2\colon [0,\infty)\to[0,1]$ be some decreasing, continuous functions such that $h_1(t)=h_2(t)=1$ for $t \leq \gamma$ and $h_1(t)=h_2(t)=0$ for $t \geq \delta$, and such that $h_1h_2 = h_1$.
Now we set
\[
c := b - h_1(|a^*|)b\bigl( 1-h_2(|a|) \bigr) \in A.
\]

Since the spectral projections of $|a^*|$ commute with $h_1(|a^*|)$, and since $h_1$ vanishes on $(\delta,\infty)$ and is constant of value~$1$ on $[0,\gamma]$, we have
\[
h_1(|a^*|)f_1 = 0, \quad
h_1(|a^*|)f_2=f_2 h_1(|a^*|) f_2, \andSep
h_1(|a^*|)f_3 = f_3.
\]
Analogously, we have
\begin{align*}
&e_1\bigl( 1-h_2(|a|) \bigr) = e_1, \quad
e_2\bigl( 1-h_2(|a|) \bigr) = e_2\bigl( 1-h_2(|a|) \bigr)e_2, \andSep \\
&e_3\bigl( 1-h_2(|a|) \bigr)=0,
\end{align*}
and thus
\[
h_1(|a^*|)b\bigl( 1-h_2(|a|) \bigr)
=
\begin{bmatrix}
0 & 0 & 0\\[3pt]
0 & h_1(|a^*|)ve_2\bigl( 1-h_2(|a|) \bigr) & 0\\[3pt]
b_{31} & b_{32}\bigl( 1-h_2(|a|) \bigr) & 0
\end{bmatrix}.
\]
We further have
\[
h_1(|a^*|)ve_2\bigl( 1-h_2(|a|) \bigr)
= ve_2h_1(|a|)\bigl( 1-h_2(|a|) \bigr)
= 0
\]
and it follows that
\[
c=
\begin{bmatrix}
ve_1 & 0 & 0\\[3pt]
0 & ve_2 & 0\\[3pt]
0 & b_{32}h_2(|a|) & b_{33}
\end{bmatrix}.
\]

Next, we show that $c$ is regular.
First, since the upper-left entry of $c$ is $ve_1$ and thus invertible in $f_1\big(\widetilde{A}\big)^{**}e_1$, it follows from \autoref{prp:RegBlockMatrix} that $c$ is regular in $A$ (equivalently, in~$\big(\widetilde{A}\big)^{**}$) if and only if 
\[
\begin{bmatrix}
ve_2 & 0\\[3pt]
b_{32}h_2(|a|) & b_{33}
\end{bmatrix}
\]
is regular in $(f_2+f_3)\big(\widetilde{A}\big)^{**}(e_2+e_3)$.
Applying the same argument again, we see that~$c$ is regular in $A$ if and only if $b_{33}$ is regular in $f_3\big(\widetilde{A}\big)^{**}e_3$.
But an analogous argument applied to the regular element $b$ shows that $b_{33}$ is indeed regular, and hence so is~$c$.

Since $c$ is regular, we have $c=w|c|$ for a partial isometry $w \in A$.
We have
\[
ce_1 = f_1c = ve_1 = f_1v
\]
from which it follows that 
\[
c^*ce_1
= c^*f_1v
= v^*f_1v
= e_1.
\]
We deduce that $p(c^*c)e_1=e_1$ for every polynomial $p$ with $p(1)=1$, and since $|c|=\lim_n p_n(c^*c)$ for such polynomials, we get $|c|e_1=e_1$.
Hence,
\[
we_1 
= w|c|e_1
= ce_1
= ve_1.
\]

Analogously, we get $f_1cc^*=f_1$ and then $f_1|c^*|=f_1$, and so $f_1w=f_1v$.
This shows that the partial isometry $w$ satisfies the desired relations.
\end{proof}

We are now ready to prove the main result (\autoref{thmA}), which characterizes the distance of an element to the set $A_\reg$ of regular elements in a \ca{} $A$.

\begin{thm}
\label{prp:DistReg}
Let $a$ be an element in a \ca{} $A$, with polar decomposition $a=v|a|$ in $A^{**}$.
Then for $\gamma \geq 0$, the following are equivalent:
\begin{enumerate}
\item
We have $\dist(a,A_\reg) \leq \gamma$.
\item
For every $\delta > \gamma$, there is a partial isometry $w \in A$ such that $we_\delta = ve_\delta$, where $e_\delta$ denotes the spectral projection of $|a|$ in $A^{**}$ associated to $(\delta,\infty)$.
\item
For every continuous function $f\colon\RR\to\RR$ vanishing on a neighborhood of $[0,\gamma]$, the element $vf(|a|)$ admits a polar decomposition within $A$.
\item
For every $\delta > \gamma$, the $\delta$-cut-down of $a$ admits a polar decomposition within~$A$.
\end{enumerate}
\end{thm}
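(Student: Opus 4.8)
The plan is to run the cycle of implications $(1)\Rightarrow(2)\Rightarrow(3)\Rightarrow(4)\Rightarrow(1)$. Three of these steps are short; the substance of $(1)\Rightarrow(2)$ is already contained in \autoref{prp:ConstructPI}, and the only genuinely new ingredient is the construction of a nearby regular element needed for $(4)\Rightarrow(1)$. For $(1)\Rightarrow(2)$: if $\dist(a,A_\reg)\le\gamma$ and $\delta>\gamma$, then $a$ lies within distance $<\delta$ of a regular element of $A$, so \autoref{prp:ConstructPI} produces a partial isometry $w\in A$ with $we_\delta=ve_\delta$ (we keep only this relation).

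For $(2)\Rightarrow(3)$: let $f\ge0$ be continuous and vanish on a neighborhood of the compact set $[0,\gamma]$, hence on $[0,\gamma+\varepsilon]$ for some $\varepsilon>0$; pick $\delta\in(\gamma,\gamma+\varepsilon)$ and a partial isometry $w\in A$ as in~(2). Since $f$ vanishes on $[0,\delta]$ we have $f(|a|)=e_\delta f(|a|)=f(|a|)e_\delta$, so $vf(|a|)=ve_\delta f(|a|)=we_\delta f(|a|)=wf(|a|)\in A$. From $e_\delta=(ve_\delta)^*(ve_\delta)=(we_\delta)^*(we_\delta)=e_\delta w^*we_\delta$ we get $w^*w\ge e_\delta$, whence
\[
\bigl|wf(|a|)\bigr|^{2}=f(|a|)\,w^*w\,f(|a|)=f(|a|)\,e_\delta\,w^*w\,e_\delta\,f(|a|)=f(|a|)^{2},
\]
so $\bigl|wf(|a|)\bigr|=f(|a|)=\bigl|vf(|a|)\bigr|$ and $vf(|a|)=w\,\bigl|vf(|a|)\bigr|$ is a polar decomposition in $A$. (Positivity of $f$, as in the introduction, is essential at this point.) Step $(3)\Rightarrow(4)$ is then the instance $f(t)=(t-\delta)_+$ of~(3), using $a_\delta=v(|a|-\delta)_+$ and $|a_\delta|=(|a|-\delta)_+$, the latter because $v^*v$ dominates the support projection of $(|a|-\delta)_+$.

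The heart of the matter is $(4)\Rightarrow(1)$, where I would show $\dist(a,A_\reg)\le\delta$ for each $\delta>\gamma$. Fix $\delta'\in(\gamma,\delta)$. By~(4) there is a partial isometry $w\in A$ with $a_{\delta'}=w(|a|-\delta')_+$, and comparing $a_{\delta'}^*a_{\delta'}=(|a|-\delta')_+w^*w(|a|-\delta')_+$ with $(|a|-\delta')_+^2$ gives $w^*w\ge e_{\delta'}$; since $(|a|-\delta')_+=e_{\delta'}(|a|-\delta')_+$, the projection $w^*w$ commutes with and absorbs $(|a|-\delta')_+$. Now put
\[
m:=(|a|-\delta')_+ + (\delta-\delta')\,w^*w\in A .
\]
In the commutative \ca{} generated by $(|a|-\delta')_+$ and $w^*w$ one checks that $m\ge0$ has spectrum in $\{0\}\cup[\delta-\delta',\infty)$ and support projection $w^*w$; hence $m$ is well-supported, so regular, with $m m^\dag=m^\dag m=w^*w$ and $w^*w\,m=m$. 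Consequently $c:=wm$ is regular, with generalized inverse $m^\dag w^*$, because $(wm)(m^\dag w^*)(wm)=w\,(m m^\dag)\,(w^*w\,m)=w\,(m m^\dag m)=wm$. Since $w\,w^*w=w$ and $w(|a|-\delta')_+=a_{\delta'}$ we have $c=a_{\delta'}+(\delta-\delta')\,w$, so, writing $r_{\delta'}(t)=\min\{t,\delta'\}$ so that $|a|-(|a|-\delta')_+=r_{\delta'}(|a|)$,
\[
\|a-c\|=\bigl\|\,v\,r_{\delta'}(|a|)-(\delta-\delta')\,w\,\bigr\|\le\|r_{\delta'}(|a|)\|+(\delta-\delta')\le\delta'+(\delta-\delta')=\delta .
\]
Letting $\delta\downarrow\gamma$ gives $\dist(a,A_\reg)\le\gamma$.

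The one delicate point I expect is the verification, in $(4)\Rightarrow(1)$, that $c=wm$ is regular: the displayed identity is transparent once one knows that $m$ has support projection exactly $w^*w$ and that $w^*w$ is a two-sided unit for $m$, but the idea lies in choosing the auxiliary element $m$ — adding a small multiple of $w^*w$ to the cut-down modulus to create a spectral gap at $0$ — and in splitting $\delta=\delta'+(\delta-\delta')$ so that the perturbation stays within budget. Everything else reduces to spectral bookkeeping in $A^{**}$ together with the already-proved \autoref{prp:ConstructPI}.
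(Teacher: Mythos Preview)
Your proof is correct and follows the same cycle $(1)\Rightarrow(2)\Rightarrow(3)\Rightarrow(4)\Rightarrow(1)$ as the paper, with the work concentrated in \autoref{prp:ConstructPI} for $(1)\Rightarrow(2)$. The only real difference is in $(4)\Rightarrow(1)$: the paper passes to the minimal unitization $\widetilde{A}$ and observes that, for $\varepsilon>0$, the element $w\bigl(\varepsilon 1 + (|a|-\delta)_+\bigr)$ is regular (a partial isometry times an invertible, via \autoref{prp:MultRegInv}) and lies within $\delta+\varepsilon$ of $a$; letting $\varepsilon\downarrow 0$ and $\delta\downarrow\gamma$ finishes. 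Your construction replaces $\varepsilon 1$ by $(\delta-\delta')\,w^*w$, which keeps the argument entirely inside $A$ at the cost of checking by hand that $m=(|a|-\delta')_+ + (\delta-\delta')w^*w$ is well-supported with support projection exactly $w^*w$ and that $wm$ is regular. The paper's route is a line shorter; yours avoids the unitization and is equally valid. Your parenthetical remark that positivity of $f$ is needed in $(2)\Rightarrow(3)$ is a fair observation: the identity $\bigl|vf(|a|)\bigr|=f(|a|)$ used there indeed requires $f\ge 0$, which the paper's introduction assumes throughout but the theorem statement does not make explicit.
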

\begin{proof}
It follows directly from \autoref{prp:ConstructPI} that (1) implies~(2).
We verify the implications (2)$\Rightarrow$(3), (3)$\Rightarrow$(4), and (4)$\Rightarrow$(1).

`(2)$\Rightarrow$(3)':
Let $f\colon\RR\to\RR$ be a continuous function vanishing on $[0,\delta]$ for some $\delta>\gamma$.
By assumption, there is a partial isometry $w \in A$ such that $we_\delta = ve_\delta$.
Then
\[
vf(|a|)
= v e_\delta f(|a|)
= w e_\delta f(|a|)
= w f(|a|),
\]
which shows that $w f(|a|)$ is the desired polar decomposition of $vf(|a|)$ inside $A$.

\medskip

`(3)$\Rightarrow$(4)':
This follows since for every $\delta>\gamma$, the $\delta$-cut-down of $a$ is of the form $vf(|a|)$ for a continuous function $f\colon\RR\to\RR$ vanishing on a neighborhood of $[0,\gamma]$.

\medskip

`(4)$\Rightarrow$(1)':
Let $\delta>\gamma$.
By assumption, $a_\delta$ admits a polar decomposition $a_\delta = w (|a|-\delta)_+$ in $A$.
We view $A$ as an ideal in its minimal unitization $\widetilde{A}$, and we use $1$ to denote the unit of $\widetilde{A}$.
Given $\varepsilon>0$, the element $\varepsilon 1 + (|a|-\delta)_+$ is strictly positive and therefore invertible in $\widetilde{A}$.
It follows from \autoref{prp:MultRegInv} that the element
\[
w\bigl( \varepsilon 1 + (|a|-\delta)_+ \bigr)
\]
is regular.
Note that $w\bigl( \varepsilon 1 + (|a|-\delta)_+ \bigr)$ belongs to $A$, and therefore to $A_\reg$.
We deduce that
\begin{align*}
\dist(a,A_\reg)
&\leq \left\| a - w\bigl( \varepsilon 1 + (|a|-\delta)_+ \bigr) \right\| \\
&\leq \| a - a_\delta \| + \left\| w (|a|-\delta)_+ - w\bigl( \varepsilon 1 + (|a|-\delta)_+ \bigr) \right\| 
\leq \delta + \varepsilon.
\end{align*}
Since this holds for all $\varepsilon>0$ and all $\delta>\gamma$, we get $\dist(a,A_\reg) \leq \gamma$.
\end{proof}

\begin{cor}
\label{prp:DistZero}
Let $a$ be an element in a \ca{} $A$, with polar decomposition $a=v|a|$ in $A^{**}$.
Assume that $a$ belongs to the closure of $A_\reg$.
Then:
\begin{enumerate}
\item
For every $\delta > 0$, there is a partial isometry $w \in A$ such that $we_\delta = ve_\delta$, where $e_\delta$ denotes the spectral projections of $|a|$ in $A^{**}$ associated to $(\delta,\infty)$.
\item
For every continuous function $f\colon\RR\to\RR$ vanishing on a neighborhood of~$\{0\}$, the element $vf(|a|)$ admits a polar decomposition within $A$.
\item
For every $\delta > 0$, the $\delta$-cut-down of $a$ admits a polar decomposition within~$A$.
\end{enumerate}
\end{cor}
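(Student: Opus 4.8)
The plan is to deduce this immediately from the main theorem (\autoref{prp:DistReg}) specialized to $\gamma = 0$. The first step is to observe that the hypothesis that $a$ lies in the closure of $A_\reg$ is precisely the statement $\dist(a,A_\reg) = 0$, which is condition~(1) of \autoref{prp:DistReg} with $\gamma = 0$. By the equivalences established there, conditions~(2), (3) and~(4) of \autoref{prp:DistReg} all hold for $\gamma = 0$.

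It then remains only to match these up with the three conditions of the corollary. Condition~(2) of \autoref{prp:DistReg} with $\gamma = 0$ says that for every $\delta > 0$ there is a partial isometry $w \in A$ with $we_\delta = ve_\delta$, which is exactly condition~(1) of the corollary. For condition~(3) of the corollary, note that a continuous function $f\colon\RR\to\RR$ vanishes on a neighborhood of $\{0\}$ if and only if it vanishes on a neighborhood of $[0,0] = \{0\}$, so condition~(3) of \autoref{prp:DistReg} with $\gamma = 0$ gives precisely that $vf(|a|)$ admits a polar decomposition within $A$ for every such $f$. Finally, condition~(4) of \autoref{prp:DistReg} with $\gamma = 0$ is verbatim condition~(3) of the corollary.

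There is no real obstacle here; the only point requiring a word of care is the trivial identification of ``vanishing on a neighborhood of $[0,\gamma]$'' with ``vanishing on a neighborhood of $\{0\}$'' when $\gamma = 0$, and the observation that membership in $\overline{A_\reg}$ is the same as $\dist(a,A_\reg) \le 0$. I would therefore write the proof as a single short paragraph invoking \autoref{prp:DistReg} with $\gamma = 0$ and reading off the three conclusions.
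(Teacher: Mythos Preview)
Your proposal is correct and is precisely the intended argument: the paper states the corollary without proof, as it is the immediate specialization of \autoref{prp:DistReg} to $\gamma=0$. Your identifications of the three conclusions with conditions~(2), (3), and~(4) of the theorem are exactly right, and the only minor rewording needed---that $a\in\overline{A_\reg}$ means $\dist(a,A_\reg)\leq 0$, and that a neighborhood of $[0,0]$ is a neighborhood of~$\{0\}$---is already noted in your plan.
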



\providecommand{\href}[2]{#2}

\end{document}